\newtheorem{remark}{Remark}
\newtheorem{theorem}{Theorem}
\newtheorem{corollary}{Corollary}
\newcommand{\rd}{\ensuremath{\mathrm{d}}}
\newcommand{\bydef}{:=}
\begin{document}

\title{Kinetic formulation of compartmental epidemic models}

\author[1]{Carolina Strecht-Fernandes\footnote{cst.fernandes@campus.fct.unl.pt, ORCID-ID: 0009-0008-1467-7633}}
\author[1]{Fabio A. C. C. Chalub\footnote{Corresponding author: facc@fct.unl.pt, ORCID-ID: 0000-0002-8081-9221}}

\affil[1]{Center for Mathematics and Applications (NOVA Math) and
Department of Mathematics, NOVA School of Science and Technology, Universidade NOVA de Lisboa, Quinta da Torre, 2829-516, Caparica, Portugal.}
\date{\today}

\maketitle

\begin{abstract}
We introduce a kinetic model that couples the movement of a population of individuals with the dynamics of a pathogen in the same population. We consider that transmission occurs when a susceptible and an infectious individual are sufficiently close for a sufficiently long time. We show that the model is formally compatible with the well-known SIRS model in mathematical epidemiology. Namely, after identifying an appropriate dimensionless variable, and considering the limit when that variable is small, we introduce a partial differential equation model of advection-drift-diffusion type (mesoscopic model), which for spatially homogeneous solutions reduces to the SIRS model. We prove the existence and uniqueness of solutions in appropriate spaces for particular instances of the model. We finish with some examples and discuss possible applications and generalisation of this modelling approach, linking kinetic models, evolutionary game theory, and mathematical epidemiology.
\end{abstract}

\noindent\textbf{Keywords}: Kinetic models; macroscopic limits; compartimental models; existence and uniqueness of solutions.
\smallskip

\noindent\textbf{MSC:} 92D30, 92D25, 35R09.

\section{Introduction}

Kinetic models were introduced in the 19th century in the study of gas dynamics~\cite{Cercignani_69,Cercignani_88}; in the late 20th century, many applications in biological modeling were found, from which we highlight applications in chemotaxis~\cite{Patlak_53,OthmerDunbarAlt_88,OthmerStevens_97,Chalubetal_04,Chalubetal_06}, immunology~\cite{DellaMarca_22,Oliveira_24}, cancer dynamics~\cite{Bellomo_04,Belloquid_04}, and, more recently, epidemiology~\cite{Boscherietal_21A,Bertagliaetal_21B}.

Mathematical models of epidemiology have a long story that dates back to seminal works by Daniel Bernoulli in the 18th Century, followed closely by Jean d'Alembert. In these works, they studied the possible impact of variolation, a primitive technique akin to vaccination (but with some important differences); cf.~\cite{Bacaer} for a detailed historical account with modern mathematical notation. After a long gap, in the early 20th Century, what is now known as compartmental models gained maturity. We refer to~\cite{Ross_1916,Kermack_1927} for the original references, and~\cite{Thieme,Britton,Murray_vol1} for a modern approach.

In the present work, we consider the so-called SIRS model, in which the population is divided into three classes: the \textbf{S}usceptible, individuals that can be contaminated by a certain pathogen, the \textbf{I}nfectious, individuals that can transmit the pathogen, and the \textbf{R}ecovered, individuals that, after being infected, benefit from a temporary or permanent immunity from the disease; when the immunity is permanent, it is customary to refer simply to the ``SIR model''. 

Transitions between the three classes above are given by $S+I\stackrel{\beta}{\to} 2I$, $I\stackrel{\gamma}{\to}R$, and $R\stackrel{\alpha}{\to}S$, where $\beta>0$, $\gamma>0$, and $\alpha\ge 0$, are the transmission rate, recovery rate, and loss of immunity rate, respectively.

One of the most well-known realisations of the above model is through the use of ordinary differential equations (ODE), in which case it reads
\begin{equation}\label{eq:SIRS}
\begin{cases}
    S'&=\alpha R-\beta\frac{SI}{S+I+R}\ ,\\
    I'&=\beta\frac{SI}{S+I+R}-\gamma I\ ,\\
    R'&=\gamma I-\alpha R\ .
\end{cases}
\end{equation}
This model is referred to as SIRS in the current work, including, by abuse of notation, the case $\alpha=0$. Several of its mathematical properties can be found in the references above.

We start by introducing a model inspired by the kinetic theory of gases, in which individuals move in a certain space; the movement is composed of a regression toward the mean, perturbed by a drift in a certain preferred direction that possibly depends on the full distribution of individuals (e.g., to avoid clusters of infectious individuals). Furthermore, depending on the encounters in state space (i.e, the space including not only position but also displacement velocity), individuals change their state from \textbf{S} to \textbf{I}, to \textbf{R}, and back to \textbf{S}. However, after introducing space and velocity in the SIRS model, transitions will depend not only on the total amount of individuals in each class, as in Eqs.~\eqref{eq:SIRS}, but on their relative positions and movement. More precisely, transitions from \textbf{S} to \textbf{I}, i.e., disease transmission, will be more likely if individuals are close to each other (at the same point of space), and if their contact lasts longer (i.e., with null relative motion). See Fig~\ref{fig:model} for an illustration of a kinetic model in two-dimensions.

\begin{figure}
    \centering
    \includegraphics[width=0.9\linewidth]{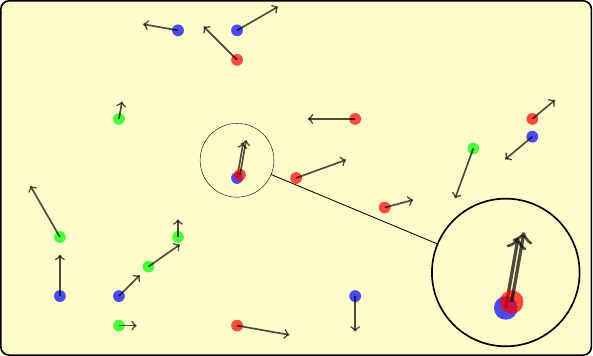}
    \caption{Illustration of the kinetic models for three classes of particles moving in a two-dimensional space. Each circle represents an individual, and the arrow indicates his or her velocity vector. Movements are composed of regression toward the mean velocity perturbed by a density-dependent drift. Blue individuals are of \textbf{S} type, while red and green individuals are of \textbf{I} and \textbf{R} types, respectively. In the highlight, we see an encounter in phase space (i.e., same position in space and same velocity) of an \textbf{S} and an \textbf{I} individual, possibly resulting in two \textbf{I} individuals, according to the prescribed transition rate. Note that when two individuals have the same position and velocity, there is more opportunity (i.e., time) for the transmission to happen than if only the same position were required.}
    \label{fig:model}
\end{figure}

We benefit from two recently introduced ideas in the field of evolutionary games. In~\cite{Ambrosio_etal_2021}, a kinetic model is formulated with a transition kernel akin to the replicator dynamics in evolutionary dynamics, i.e., individuals change classes according to the value obtained, or payoff, in their interaction with the remaining individuals in the population. In this work, however, the velocity is not directly related to physical space movement, but rather an internal parameter. See~\cite{HofbauerSigmund} for a detailed description of evolutionary game dynamics, in particular of the replicator equation.

In~\cite{HansenChalub_JTB2024}, the SIRS evolutionary dynamics was written in the framework of evolutionary game theory, showing that the SIRS model can be reinterpreted as a mix of two different games, with two different group sizes. 

Putting these two ideas together, we have a kinetic model with a transition kernel inspired by epidemic models. From the kinetic model, and following classical procedures, we compute the so-called \emph{macroscopic limit}. In the specialised literature, it is known as \emph{hydrodynamical limit}. Namely, it consists of the derivation of fluid dynamic equations from the detailed molecular dynamics modelled by the Boltzmann equation. On one hand, from a detailed microscopic description of the dynamics, relevant scales for problem variables are identified, including, e.g., time, space, and densities. On the other hand, macroscopic, observational variables are compared with typical microscopic variables, which are assumed to be comparatively small. This allows the derivation of simpler, but still precise, models with a focus on observable variables. See~\cite{Cercignani_69,Cercignani_88,SaintRaymond_2009,Golse_Levermore,Golse} and references therein for further information.

In this work, contrary to the usual in the literature, we will call the partial differential equations model a \emph{mesoscopic model}, and the \emph{macroscopic limit} will be reserved for the ordinary differential equation model, namely the SIR model, as it is our final goal.

There are several differences between the model introduced here and the one presented in~\cite{Boscherietal_21A} that are worth a brief comment. The most important one is that we include class transitions directly inspired by the SIRS model, with the aim of having as a macroscopic limit, when the problem is spatially homogeneous, the system~\eqref{eq:SIRS}. A second important difference is to consider more general movements of the individuals and proceed to a careful scale separation of the dependent and independent relevant variables and the class transitions. Finally, we study the existence and uniqueness of solutions of the kinetic model, although in a limited manner.

We finish this introduction with an outline of the present work. In Sec.~\ref{sec:model} we introduce our basic model, i.e, the kinetic model for epidemiology; in Sec.~\ref{sec:limit} we formally show that its form macroscopic limit is, in the spatially homogeneous case, the SIRS model~\eqref{eq:SIRS}, validating the idea that the kinetic model is a generalisation of the SIRS model. Furthermore, we define a mesoscopic model, a partial differential equation (PDE) of drift-diffusion-advection type, that lies in between the kinetic and the SIRS model.  In Sec. ~\ref{sec:existence}, we proceed to show the existence and uniqueness of solutions in the appropriate spaces of particular instances of the kinetic and, by extension, of the PDE model derived before. We show some applications in Sec.~\ref{sec:examples} and discuss possible limitations and continuations of the present work in Sec.~\ref{sec:conclusions}.

\section{The Model}
\label{sec:model}

Let $ f_j : \mathbb{R}_{+} \times \Omega \times V $, $ j = S, I, R$, be the density of individuals of class $j$ at time $ t \geq 0$, position $ x \in \Omega \subset \mathbb{R}^n $ and velocity $v \in V \subset  \mathbb{R}^n$. We assume that the space domain $\Omega$ is a bounded open set with smooth boundary and $V$ is compact and spherically symmetrical (in particular, $\int_V v\rd v=0$). We define $|V|\bydef\int_V\rd v$. We will consider that $\Omega$ and $V$ are the same for all individuals except in Subsec.~\ref{ssec:confining}, where the confining of \textbf{I} individuals in a one-dimensional spatial model is considered. In this case, $\Omega$ is possibly unbounded and $V_S$, $V_R$, $V_I$ are different discrete sets of admissible velocities for \textbf{S}, \textbf{I}, and \textbf{R} individuals, respectively.

The density $f_j$ evolves according to a Boltzmann-type equation:
\begin{equation}\label{eq:kinetic_basic}
\partial_t f_j + v \cdot \nabla _x f_j = \mathfrak{Q}_j [\mathbf{f}] , \quad \mathbf{f}:= (f_S, f_I , f_R )\ ,\quad  j = S, I, R\ .
\end{equation}
Boundary conditions are given by $ \hat n\cdot\nabla f_j\Big|_{\partial\Omega} = 0$, $j=S,I,R$, where $\hat n$ is the unit exterior normal to the domain $\Omega$.

In the sequel, whenever we refer to results valid to all classes of individuals, we use the subindex $j$, without further specifications. Furthermore, we use the notation $f_j\bydef f_j(t,x,v)$, and $f'_j\bydef f_j(t,x,v')$.

The transition kernel $\mathfrak{Q}_j$ describes three different behaviors for the individual in the population: i) a regression toward the mean change of velocity in the space $V$; ii) a preferred direction in which individuals tends to move towards more favorable environments; and iii) a class-transition kernel accounting to the pathogen spread dynamics, healing and immunity waning, according to the SIRS model.

More specifically, we assume
\[
\mathfrak{Q}_j [\mathbf{f}] = \mathcal{Q}_j ^{(R)} [\mathbf{f}] + \mathcal{Q}_j^{(P)} [\mathbf{f}]+\mathcal{Q}^{(T)}_j[\mathbf{f}]\ , 
\]
where
\begin{itemize}
\item The \emph{regression toward the mean reorientation kernel} is given by
\[
\mathcal{Q}_j ^{(R)} [\mathbf{f}] = \lambda_j (t,x) \left (  \frac{1}{ |V|} \int_V f_j (t,x,v')  \rd v' - f_j(t,x,v) \right )\ ,\quad j=S,I,R\ .
\]
The reorientation rate is given by $\lambda_j >0$. This expression describes a mean reversion process, indicating a drift towards its mean function. In particular, given a point $x\in\Omega$, the number of individuals with velocity $v$ tends to increase if the density function $f_j$ is below average and decrease otherwise.  
We note that the kernel $\mathcal{Q}_j^{(R)}$ is conservative, i.e., $\int_V\mathcal{Q}_j^{(R)}[\mathbf{f}]\rd v=0$. This simple fact will be useful in the subsequent calculations.

\item The \emph{preferred direction} kernel models the rate of change to and from preferable and undesirable directions, respectively. We assume that only susceptible individuals have preferred directions and define
\begin{equation*}
\mathcal{Q}_S ^{(P)}  [\mathbf{f}]\bydef   
\int_V  \bigl( T[f_I] (v,v') f_{S}(t,x,v') - T[f_I] (v',v) f_{S}(t,x,v) \bigr)  \rd v',
\end{equation*}
where the transition from $v'$ to $v$ is given by
\[
T[f_I](v,v') = - \eta(t,x) v \cdot\nabla_x f_I + \xi(t,x) v' \cdot \nabla_x f_I'\ ,
\]
with $\eta, \xi \geq 0$. In particular, this means that susceptible individuals try to avoid directions with increasing concentrations of \textbf{I} individuals (first term in the right-hand side) and to keep the velocity if it points toward a decrease in the concentration of \textbf{I} individuals. For completeness, we also define preferred direction kernels for \textbf{I} and \textbf{R} classes to be identically null:
\[
\mathcal{Q}_I ^{(P)}[\mathbf{f}]\bydef\mathcal{Q}_R ^{(P)}[\mathbf{f}]\bydef 0\ ,\quad \forall\enspace\mathbf{f}\ .
\]
It is straightforward to prove that the kernel $\mathcal{Q}^{(P)}_j$ is also conservative.
\item Finally, we define the {class transition kernel}, where
\[
\mathcal{Q}_j ^{(T)} [\mathbf{f}] = \Psi_j (\mathbf{f}) f_j(t,x,v)\ ,\quad  j=S,I,R\ ,
\]
with
\begin{align*}
\Psi_S (\mathbf{f}) &= - \beta  \frac{f_I}{f_S+f_I+f_R} + \alpha  \frac{f_R}{f_S}\ ,\\
\Psi_I (\mathbf{f}) &= \beta  \frac{f_S}{f_S+f_I+f_R} - \gamma\ ,\\
\Psi_R (\mathbf{f}) &= \gamma \frac{f_I}{f_R} - \alpha\ .
\end{align*}
Parameters $\alpha,\beta,\gamma$ may depend on $t$, $x$, and $v$, except if otherwise stated. We assume that transmission happens when one \textbf{S} and one \textbf{I} individual are aligned in space-phase (i.e., same position and same velocity), which gives more opportunity (i.e., more time) for the pathogen transmission. This is the reason we assume that the class-transition kernel depends on the phase-space density $f_j$ and not on the spatial densities $\int_Vf_j\rd v$. 

Class transitions kernels $\mathcal{Q}^{(T)}_j$ do not satisfy the conservative property.
        \end{itemize}

\begin{remark}
Assuming that $\Psi_j$ represents the relative fitness of type $j$ in a given population, it is clear that the average fitness is zero $\overline{\Psi}=f_S\Psi_S+f_I\Psi_I+f_R\Psi_R=0$. 
The choice of the expression is such that the replicator system of differential equations associated to the fitnesses functions $\Psi_S$, $\Psi_I$, and $\Psi_R$ is given by $f_j'=f_j\left(\Psi_j-\overline{\Psi}\right)$, $\overline{\Psi}=\sum_jf_j\Psi_j$, is the SIRS system of differential equations. For further details, see~\cite{HansenChalub_JTB2024}.
\end{remark}

\section{The macroscopic limit}
\label{sec:limit}

In this section, we study the macroscopic limit of the kinetic model in three steps. In Subsec.~\ref{ssec:admensional} we rewrite Eq.~\eqref{eq:kinetic_basic} in dimensionless form, and identify small parameters; in the sequel, Subsec.~\ref{ssec:mesoscopic}, we identify an advection-reaction-diffusion partial differential equation (PDE) for the evolution of the density of individuals in the domain $\Omega$. We conclude in Subsec.~\ref{ssec:SIRlimit}, that spatially homogeneous solutions of the previous system of PDEs, whenever they exist, correspond to the solution of the SIRS epidemic model~\eqref{eq:SIRS}.

\subsection{Adimensionalization of the kinetic model}
\label{ssec:admensional}

In order to provide the dimensionless form of Eq.~\eqref{eq:kinetic_basic}, we define typical observation time, modulus of displacement, modulus of velocity, and density $t_0$, $x_0$, $v_0$, $f_{0j}$, respectively. See Fig.~\ref{fig:fast_and_slow}.

We define new dimensionless variables $\tilde t\bydef t/t_0$, $\tilde x\bydef x/x_0$, $\tilde v\bydef v/v_0$, and $\tilde f_j\bydef f_j/f_{0j}$ such that
\[
\partial_{\tilde t}\tilde f_j+\frac{t_0v_0}{x_0}\tilde v\cdot\nabla_{\tilde x}\tilde f_j=\frac{t_0}{f_{0j}}\mathfrak{Q}_j[\mathbf{f}_0\tilde{\mathbf{f}}]
\]
We note that $\frac{t_0v_0}{x_0}$ and $\frac{t_0}{f_{j0}}\mathfrak{Q}_j$ are non-dimensional. Furthermore, the microscopic velocity $v_0$ is much larger than the macroscopic velocity $x_0/t_0$, i.e.,
\[
\varepsilon\bydef \left(\frac{v_0}{x_0/t_0}\right)^{-1}\ll1\ .
\]

\begin{figure}[t]
\centering
    \includegraphics[width=0.5\textwidth]{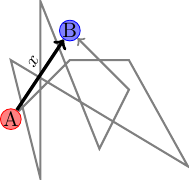}
\caption{Example of a movement of a particle from point A (red) to point B (blue). The particle travels at instantaneous velocity $v$ with trajectory given by the grey line, with typical modulus $v_0$. The direction of movement changes periodically, according to the given kernels. After an observation time $t_0$, the typical displacement (black arrow) is given by $x$, with modulus $x_0$, resulting in a average (macroscopic) velocity of $\frac{x_0}{t_0}\ll v_0$. Typical densities are not represented in the figure, which focus on the movement of a single particle.}
\label{fig:fast_and_slow}
\end{figure}
We assume that typical density is low (an assumption akin to the rarefied gas assumption used in taking macroscopic limits of ideal gases modeled by Boltzmann equations, cf.~\cite{Cercignani_69,Cercignani_88,SaintRaymond_2009} and references therein). More precisely, we redefine the transition kernels $\varepsilon^{-2}\tilde{\mathfrak{Q}}_{j}^{(\varepsilon)}[\mathbf{f}]\bydef \frac{t_0}{f_{0j}}\mathfrak{Q}_j[\mathbf{f}_0\tilde{\mathbf{f}}]$, and omit the $\tilde{}$ notation to find the non-dimensional form of the kinetic equation
\begin{equation}\label{eq:kinetic_epsilon}
\partial_t f_{j} + \frac{1}{\varepsilon} v \cdot \nabla _x f_{j} = \frac{1}{\varepsilon^2}\mathfrak{Q}_{j}^{(\varepsilon)} [\mathbf{f}] ,
\end{equation}
Solution of Eq.~\eqref{eq:kinetic_epsilon} will be denoted by $f_{j}^{(\varepsilon)}$ to stress its dependency on the small positive dimensionless parameter $\varepsilon$.

After proper adimensionalization, the transition kernel will also depend on the parameter $\varepsilon$. As we will see in this section, this implies in particular, that the three different effects discussed in Sec.~\ref {sec:limit} must differ in scales of $\varepsilon$ in order to find the correct macroscopic limit. More precisely, we assume the formal asymptotic of the kernel
\begin{equation}
\label{eq:formalexpansionQ}
 \mathfrak{Q}_{j}^{(\varepsilon)}   = \mathcal{Q}_j^{(R)} + \varepsilon \mathcal{Q}_j ^{(P)}
 + \varepsilon^2 \mathcal{Q}_j ^{(T)}+o\left(\varepsilon^2\right)\ .
\end{equation}

This means, in particular, that the leading order effect in the individual movement is the reorientation, followed by a small-order perturbation that gives preferred directions (when it is non-zero), and a second-order effect guaranteeing that the class transitions are compatible with the SIRS model.

\subsection{The mesoscopic limit}
\label{ssec:mesoscopic}
In this subsection, we derive a partial differential equation for the space and time dependence of the leading order term, i.e., to 
\[
\rho_j\bydef\lim_{\varepsilon\to0}\int_Vf_{j}^{(\varepsilon)}\rd v\ ,\quad (t,x)\in\mathbb{R}_+\times\Omega\ .
\]
The model satisfied by $\rho_j$ will be called \emph{mesoscopic model}, which is an intermediate model between the phase-space evolution described by the kinetic equations in $\mathbb{R}_+\times\Omega\times V$ and the SIRS model in which all variables depend only on $t\in\mathbb{R}_+$. 

Before proceeding to obtain the mesoscopic limit, we remind the reader of a well-known theorem, whose proof can be found in the references:
\begin{theorem}[Lax-Milgram Theorem~\cite{Ramaswamy_1980,Showalter_1997,Bisi_etal}]\label{thm:laxmilgram}
Let $\mathcal{H}$ be a Hilbert space with inner product $\langle\cdot,\cdot\rangle$, and let $\mathcal{B}$ be a continuous bilinear form $\mathcal{B} : \mathcal{H}\times \mathcal{H}\rightarrow \mathbb{R}$. Assume further that $\mathcal{B}$ is  coercive, i.e. $\mathcal{B}(x, x) \geq c \langle x,x\rangle$ for some positive constant $c$. Then, given $w \in \mathcal{H}$, there exists a unique element $x \in \mathcal{H}$ such that $ \mathcal{B}(x, u) = \langle w, u \rangle$ for all $u \in \mathcal{H}$.
\end{theorem}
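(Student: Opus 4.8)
The plan is to reduce the statement to the invertibility of a bounded linear operator and then combine the Riesz representation theorem with elementary Hilbert-space geometry. First I would fix $x\in\mathcal{H}$ and observe that $u\mapsto\mathcal{B}(x,u)$ is a bounded linear functional on $\mathcal{H}$, boundedness coming from the continuity of $\mathcal{B}$; by the Riesz representation theorem there is a unique element, call it $Ax$, with $\mathcal{B}(x,u)=\langle Ax,u\rangle$ for all $u\in\mathcal{H}$. Linearity of $\mathcal{B}$ in its first argument makes $x\mapsto Ax$ linear, and the estimate $\|Ax\|^2=\langle Ax,Ax\rangle=\mathcal{B}(x,Ax)\le M\|x\|\,\|Ax\|$, with $M$ the continuity constant of $\mathcal{B}$, gives $\|Ax\|\le M\|x\|$, so $A:\mathcal{H}\to\mathcal{H}$ is bounded. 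The condition $\mathcal{B}(x,u)=\langle w,u\rangle$ for all $u$ is then equivalent to $Ax=w$, so the whole theorem reduces to showing that $A$ is a bijection.

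Next I would extract from coercivity the two ingredients needed for surjectivity, injectivity being immediate. From $c\|x\|^2\le\mathcal{B}(x,x)=\langle Ax,x\rangle\le\|Ax\|\,\|x\|$ one obtains $\|Ax\|\ge c\|x\|$ for every $x$; this yields injectivity at once, and a standard Cauchy-sequence argument shows that the range $\mathrm{ran}(A)$ is closed. To see that $\mathrm{ran}(A)$ is also dense, suppose $z$ is orthogonal to $\mathrm{ran}(A)$; testing with $u=z$ in $\mathcal{B}(z,u)=\langle Az,u\rangle$ gives $0=\langle Az,z\rangle=\mathcal{B}(z,z)\ge c\|z\|^2$, hence $z=0$. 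A closed dense subspace of a Hilbert space is the whole space, so $\mathrm{ran}(A)=\mathcal{H}$, $A$ is bijective with $\|A^{-1}\|\le c^{-1}$, and for given $w\in\mathcal{H}$ the unique solution is $x=A^{-1}w$.

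An alternative route, which avoids invoking the Riesz theorem again for the range of $A$, is a fixed-point argument: for a parameter $\rho>0$ define $T_\rho:\mathcal{H}\to\mathcal{H}$ by $T_\rho x=x-\rho(Ax-w)$ and estimate $\|T_\rho x_1-T_\rho x_2\|^2\le(1-2\rho c+\rho^2 M^2)\|x_1-x_2\|^2$ using the coercivity lower bound and the continuity upper bound; choosing $\rho\in(0,2c/M^2)$ makes $T_\rho$ a strict contraction, and its unique fixed point, provided by the Banach fixed-point theorem, solves $Ax=w$. I would present the first argument as the main line and record the second as a remark.

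There is no real obstacle here, since this is a classical result; the one step that deserves care rather than mere bookkeeping is the surjectivity of $A$, where one must combine closedness of the range, obtained from the coercivity lower bound, with density of the range, obtained by testing against an arbitrary element of the orthogonal complement. Everything else is routine use of the Cauchy–Schwarz inequality, the continuity and coercivity constants, and the Riesz representation theorem.
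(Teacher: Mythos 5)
Your proposal is correct and complete: the reduction to $Ax=w$ via the Riesz representation theorem, injectivity and closed range from the coercivity bound $\|Ax\|\ge c\|x\|$, and density of the range by testing an orthogonal element $z$ against $\mathcal{B}(z,z)\ge c\|z\|^2$ (an argument that, as required, never assumes symmetry of $\mathcal{B}$) together give bijectivity, and the contraction-mapping variant is likewise sound. The paper itself does not prove this theorem but only cites it from the references, so there is nothing to compare against; what you have written is the standard textbook proof and can stand on its own.
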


In the sequel, we will use a more specific form of the Lax-Milgram theorem:

\begin{corollary}\label{cor:laxmilgram}
Let $\mathcal{H}_0=\{h\in L^2(V)|\int_{V}h\rd v=0\}$. Then, for each $\mathbf{g}=(g_S,g_I,g_R)$, with $g_j\in\mathcal{H}_0$ there is a unique $\mathbf{f}=(f_S,f_I,f_R)$ such that $f_j\in\mathcal{H}_0$,  and $\mathcal{Q}^{(R)}_j[\mathbf{f}]=-\mathbf{g}$, $j=S,I,R$, in the weak sense.
\end{corollary}

\begin{proof}
We start by defining 
\[
W_\lambda[f]\bydef \lambda\left(\frac{1}{|V|}\int_Vf'\rd v'-f\right)\ ,
\]
for strictly positive $\lambda$.
It is clear that $\mathcal{H}_0$ is a Hilbert space with the natural $L^2$-inner product, $\langle f,g\rangle\bydef\int_V fg\rd v$. Define the bilinear form 
$\mathcal{B}(g,h)\bydef-\int_V W_\lambda[g]h\rd v$, and 
\[
\mathcal{B}(g,g)=-\lambda
\left(\int_{V\times V}\frac{1}{|V|}g'g\rd v'\rd v-\int_Vg^2\rd v\right)=\lambda\langle g,g\rangle.
\]
We conclude that $\mathcal{B}$ is coercive. The continuity is immediate.

The Lax-Miligram Theorem~\ref{thm:laxmilgram} guarantees that for each $g\in\mathcal{H}_0$, there is a unique $f\in\mathcal{H}_0$ such that
\[
\langle -g,h\rangle=-\mathcal{B}(f,h)=\int_V W_\lambda[f]h\rd v\ ,\quad\forall h\in\mathcal{H}_0\ .
\]
Noting that $\mathcal{Q}^{(R)}_j[\mathbf{f}]=W_{\lambda_j}[f_j]$, we finish the proof.
\end{proof}

We assume the formal expansion of the solution
\[
 f_j^{(\varepsilon)}=f_{j,0}+\varepsilon f_{j,1}+\varepsilon^2f_{j,2}+o\left(\varepsilon^2\right)\ ,
\]
 where $f_{j,i}$ is the $i$-th order expansion in $\varepsilon$ of $f_j^{(\varepsilon)}$.

The objective of this subsection is to find a system of PDEs for $\rho_j=\int_Vf_{j,0}\rd v$.
From Eq.~\eqref{eq:formalexpansionQ}, it is clear that $\mathcal{Q}^{(R)}_j$, $\mathcal{Q}^{(P)}_j$, and $\mathcal{Q}^{(T)}_j$ should be expanded up to second, first, and zero order in $\varepsilon$, respectively. Ommitting terms of higher order, we find:
\begin{align*}
\mathcal{Q}^{(R)}_j[\mathbf{f}]&=\lambda_j\left[\frac{1}{V}\int_V \left(f_{j,0}'+\varepsilon f_{j,1}'+\varepsilon^2 f_{j,2}'\right)\rd v'-\left(f_{j,0}+\varepsilon f_{j,1}+\varepsilon^2 f_{j,2}\right)\right]\\
&=\mathcal{Q}_j^{(R)}[\mathbf{f}_{0}]+\varepsilon\mathcal{Q}_j^{(R)}[\mathbf{f}_{1}]+\varepsilon^2\mathcal{Q}_j^{(R)}[\mathbf{f}_{2}]\ ,\\
\mathcal{Q}^{(P)}_S[\mathbf{f}]&=\int_V\left\{T\left[f_{I,0}+\varepsilon f_{I,1}\right](v,v')\left(f_{S,0}'+\varepsilon f_{S,1}'\right)\right. \\
&\quad - \left.T\left [f_{I,0}+\varepsilon f_{I,1}\right](v',v)\left(f_{S,0}+\varepsilon f_{S,1}\right)\right\}\rd v'\\
&=\int_V\left(T[f_{I,0}](v,v')f_{S,0}'-T[f_{I,0}](v',v)f_{S,0}\right)\rd v'+\\
&\quad+\varepsilon\left\{\int_V\left(T[f_{I,0}](v,v')f_{S,1}'-T[f_{I,0}](v',v)f_{S,1}\right)\rd v'\right.\\
&\qquad\left.+\int_V\left(T[f_{I,1}](v,v')f_{S,0}'-T[f_{I,1}](v',v)f_{S,0}\right)\rd v'\right\}\\
&=\mathcal{Q}^{(P)}_S[\mathbf{f}_{0}]+\overline{\mathcal{Q}}^{(P)}_S[\mathbf{f}_{0},\mathbf{f}_{1}]\ ,\\
\end{align*}
where 
\begin{align*}
    \overline{\mathcal{Q}}^{(P)}_S[\mathbf{f}_{0},\mathbf{f}_{1}]&\bydef
  \int_V\left(T[f_{I,0}](v,v')f_{S,1}'-T[f_{I,0}](v',v)f_{S,1}\right)\rd v'\\
  &\quad+\int_V\left(T[f_{I,1}](v,v')f_{S,0}'-T[f_{I,1}](v',v)f_{S,0}\right)\rd v'
\end{align*}
is conservative and $\mathbf{f}_{\kappa}\bydef (f_{S,\kappa},f_{I,\kappa},f_{R,\kappa})$, $\kappa=0,1$.

Finally, expanding both sides of Eq.~\eqref{eq:kinetic_epsilon}, we find
\begin{align*}
       &\varepsilon^2  \partial _t f_{j,0 } + \varepsilon  v\cdot \nabla _x f_{j,0 } + \varepsilon^2  v \cdot \nabla _x f_{j,1 } \\
       &\quad=   \mathcal{Q}_j ^{(R)} [ \mathbf{f}_{0}] + \varepsilon  \left ( \mathcal{Q}_j^{(R)}[ \mathbf{f}_{1}]  + \mathcal{Q}_j^{(P)}[ \mathbf{f}_{0}] \right ) \\
        &\qquad+ \varepsilon^2 \left ( \mathcal{Q}_j ^{(R)}[ \mathbf{f}_{2}]  + \overline{\mathcal{Q}}_j^{(P)}[\mathbf{f}_0, \mathbf{f}_{1}] +\mathcal{Q}_j ^{(T)}[ \mathbf{f}_{0}] \right )\ .
    \end{align*}
    where $\overline{\mathcal{Q}}_{j}^{(P)}\equiv 0$ for $j=I,R$.

In the remainder of this subsection, we compare terms of the same order of magnitude on both sides of the previous equation. 

\subsubsection{Leading Order Equation}
Equating terms of order $\varepsilon^0=1$, we find
\[
0 = \mathcal{Q}_j ^{(R)} [ \mathbf{f}_{0}] = \lambda_j \left ( \frac{1}{ |V|} \int_V f_{j,0} '   \rd v' - f_{j,0} \right )\ .
\]
The solution of the previous equation is such that
\[
    f_{j,0} =  \frac{1}{ |V|} \int_V f_{j,0} '  \rd v'\ .
\]
We conclude that for all $j$, $\int_ V v f_{j,0} \rd v = 0$, and this implies, in particular, that $f_{j,0}$ is spherically symmetric, i.e., there is a function $F:V\to\mathbb{R}_+$, with $F(v)=F(|v|)$, such that 
\begin{equation}\label{eq:leadingorder}
f_{j,0}(t,x,v)=\rho_j(x,t)F(|v|)\ ,
\end{equation}
and $\int_VF(v)\rd v=1$.

\subsubsection{Order 1 Equation}
Equating terms of order $\varepsilon^1=\varepsilon$, we find
\begin{equation*}
    v \cdot \nabla _x f_{j, 0 } = \mathcal{Q}_j ^{(R)}[ \mathbf{f}_{1}]  + \mathcal{Q}_j ^{(P)}[ \mathbf{f}_{0}]
\end{equation*}

For $j= I,R$, we only need to solve the equation
\begin{equation}\label{eq:order_one}
 \lambda_j  \left( \frac{1}{ |V|} \int_V f_{j,1} ' \rd v' - f_{j,1} \right)=\mathcal{Q}_j ^{(R)} [ \mathbf{f}_{1}] = Fv \cdot
  \nabla_x \rho_{j}\ .
\end{equation}
The solution is given by
\begin{equation}\label{eq:fIR1}
f_{j,1} = - \kappa_j (t,x,v) \nabla_x \rho_{j}(t,x) + \chi_{j}(t,x) F(v)\ .
\end{equation}
More precisely, from Corolary~\ref{cor:laxmilgram}, it follows that there is $ \bm{\kappa} (t,x,v)$ solution of 
\begin{equation}\label{eq:kappa}
\mathcal{Q}_j ^{(R)} [ \bm{\kappa}] = -vF\ ,\quad\bm{\kappa}=(\kappa_S,\kappa_I,\kappa_R)\ ,\quad j=S,I,R\ ,
\end{equation}
and $\chi_j F$ is in the kernel of the operator $\mathcal{Q}_j^{(R)}$.

For $j=S$, we need to add an extra term on the right-hand side of the Eq.~\eqref{eq:order_one}:
\begin{align*}
 \mathcal{Q}_S^{(P)}[\mathbf{f}_0]&=\int_V \Big(- \eta (t,x)  v \cdot (\nabla _x f_ {I,0}) f_{S,0} ' + \xi (t,x)  v' \cdot (\nabla_x f_ {I,0}') f_{S,0} '\\
        &\quad+ \eta (t,x)  v' \cdot  (\nabla _x f_ {I,0}')  f_{S,0} - \xi (t,x)  v \cdot (\nabla_x f_ {I,0} )f_{S,0}  \Big) \rd v'\\
         &= \int_V\Big( -\eta FF'  v \cdot    (\nabla _x \rho_{I} )  \rho_{S} + \xi F'^2  v' \cdot  (\nabla_x \rho_{I} ) \rho_{S} \\
         &\quad+ \eta FF' v' \cdot (\nabla _x  \rho_{I} ) \rho_{S} - \xi F^2 v \cdot (\nabla_x \rho_{I} ) \rho_{S} \Big)\rd v'\\
         & = - ( \eta  + \xi F|V| ) vF \cdot (\nabla _x  \rho_{I} ) \rho_{S}\ ,
\end{align*}
where $\eta=\eta(t,x)$, $\xi=\xi(t,x)$, and $F'=F(v')=F(|v'|)$. Note that one particular realisation of the leading order equilibrium distribution $F$ is $F(v)=\frac{1}{|V|}$. This will be the case whenever $V$ is a hypersphere (a circle, in the two-dimensional case). In this case, the last equation simplifies to $ \mathcal{Q}_S^{(P)}[\mathbf{f}_0]=- ( \eta  + \xi ) vF \cdot (\nabla _x  \rho_{I} ) \rho_{S}$.

We conclude that
\begin{equation}\label{eq:fS1}
f_{S,1} =  - \Theta(t,x,v) ( \nabla_x \rho_{I} )
\rho_{S} - \kappa_S(t,x,v) \nabla_x \rho_{S} + \chi_{S} F
\end{equation}
where $\Theta (t,x,v)$ is the solution of $ \mathcal{Q}_S^{(R)} [ \Theta] = - ( \eta  + \xi )  vF$, cf. Corollary~\ref{cor:laxmilgram}.

\subsubsection{Order 2 equations}
\label{sssec:order2}

The order $\varepsilon^2$ equation is given by
 \begin{equation*}
    \partial_t f_{j,0 } + v \cdot \nabla_x f_{j, 1 } = \mathcal{Q}_j^{(R)}[ \mathbf{f}_{2}]  + \overline{\mathcal{Q}}^{(P)}_j[\mathbf{f}_0, \mathbf{f}_{1}] +\mathcal{Q}_j^{(T)}[ \mathbf{f}_{0}]\ .
\end{equation*}

Integrating on $V$ and using Eqs.~\eqref{eq:leadingorder} using the conservative property of $\overline{\mathcal{Q}}^{(P)}_j$, we find
\[
 \partial_t\rho_{j,0}+\int_V v\cdot\nabla_xf_{j,1}\rd v=\int_V\mathcal{Q}_j^{(T)}[\mathbf{f_0}]\rd v\ .
\]

For $j=I$, we find
\begin{align*}
& \partial_t\rho_{I}-\int_V v\cdot\nabla_x\left[\kappa_{I}(t,x,v)\nabla_x\rho_{I}\right]\rd v+\int_Vv\cdot\nabla_x\left[\chi_{I}F(v)\right]\rd v\\
&\qquad=\int_V\left(\beta\frac{\rho_{S}}{\rho_{S}+\rho_{I}+\rho_{R}}-\gamma\right)\rho_{I}F(v)\rd v=\beta_0\frac{\rho_{S}\rho_{I}}{\rho_{S}+\rho_{I}+\rho_{R}}-\gamma_0\rho_{I}\ ,
\end{align*}
where $\beta_0\bydef\int_V\beta F(v)\rd v$, $\gamma_0\bydef\int_V\gamma F(v)\rd v$; we define a similar equation for the loss of immunity parameter: $\alpha_0\bydef\int_V\alpha F(v)\rd v$. Due to the normalization $\int_VF(v)\rd v=1$, this means that, for each $x$ and $t$, $\alpha_0$, $\beta_0$, and $\gamma_0$ are averages at equilibrium of the corresponding parameters in the $V$-space.

Note that $\int_Vv\cdot\nabla_x\left[\rho_{I}F(v)\right]\rd v=
\int_VvF(v)\rd v\cdot \nabla_x\rho_{I}=0$. 

Defining $D(t,x)\bydef\int_Vv\cdot\kappa(t,x,v)\rd v$, we conclude
\begin{equation}\label{eq:FPI}
 \partial_t\rho_{I}=\nabla_x\left[D(t,x)\nabla_x\rho_{I}\right]+\beta_0\frac{\rho_{S}\rho_{I}}{\rho_{S}+\rho_{I}+\rho_{R}}-\gamma_0\rho_{I}\ .
\end{equation}

The equation for $j=R$ is similar:
\begin{equation}\label{eq:FPR}
 \partial_t\rho_{R}=\nabla_x\left[D(t,x)\nabla_x\rho_{R}\right]+\int_V\mathcal{Q}_j^{(T)}[\mathbf{f}_0]\rd v=\nabla_x\left[D(t,x)\nabla_x\rho_{R}\right]+\gamma_0\rho_{I}-\alpha_0\rho_{R}\ .
\end{equation}

Finally, we study the equation for $j=S$. The only difference from the previous case is the existence of an extra term in Eq.~\eqref{eq:fS1} when compared to Eq.~\eqref{eq:fIR1}:
\[
 \int_Vv\cdot\nabla_x\left[\Theta(t,x,v)(\nabla_x\rho_{I})\rho_{S}\right]\rd v=\nabla_x\cdot\left[\Gamma(t,x)(\nabla_x\rho_{I})\rho_{S}\right]\ ,
\]
where $\Gamma(t,x)=\int_Vv\cdot\Theta(t,x,v)\rd v$. Finally,
\begin{equation}\label{eq:FPS}
 \partial_t\rho_{S}=\nabla_x\left[D(t,x)\nabla_x\rho_{S}+\Gamma(t,x)\left(\nabla_x\rho_{I}\right)\rho_{S}\right]+\alpha_0\rho_{I}-\beta_0\frac{\rho_{S}\rho_{I}}{\rho_{S}+\rho_{I}+\rho_{R}}\ .
\end{equation}

\begin{remark}
    Assume that the transmission and recovery rates are such that an outbreak is locally prevented for any point in $\Omega$, for any time $t$, i.e., $\beta_0<\gamma_0$ for $(t,x)\in\mathbb{R}_+\times\Omega$. Multiplying Eq.~\eqref{eq:FPI} by $\rho_I$ and integrating in $\Omega$, we find
    \[
    \frac{1}{2}\frac{\rd\ }{\rd t}\int_{\Omega}\rho_I^2\rd x\le-\int_\Omega D(t,x)|\nabla_x\rho_I|^2\rd x-\int_\Omega(\gamma_0-\beta_0)\rho_I^2\rd x<0\ ,
    \]
    where we used that $\frac{\rho_S}{\rho_S+\rho_I+\rho_R}\le 1$. We conclude that $\rho_I\to0$ for all $x\in\Omega$ when $t\to\infty$, i.e., the system approaches asymptotically the disease-free state. The condition on $\beta_0$ and $\gamma_0$ could be weakened without losing the disease-free state's uniqueness as the only globally asymptotic attractor of the dynamics.
\end{remark}

\subsection{The SIRS epidemic model}
\label{ssec:SIRlimit}

Let us assume a space homogeneous problem, i.e., we consider that $\alpha_0$, $\beta_0$, and $\gamma_0$ do not depend on $x$, and that the initial condition is also space independent. Therefore, there is
a space homogeneous solutions of the system~\eqref{eq:FPI}--\eqref{eq:FPS}, such that $(\rho_S,\rho_I,\rho_R)$ is the solution of SIRS model~\ref{eq:SIRS}, with $S=\rho_{S}$, $I=\rho_{I}$, and $R=\rho_{R}$.

However, when working with the SIRS model, we are interested in the total number of type \textbf{S}, \textbf{I}, and \textbf{R} individuals (possibly normalised to the total population), and not its average space-density. Therefore, it is natural to define
\[
S=\int_\Omega\rho_S\rd x\ ,\quad
I=\int_{\Omega}\rho_I\rd x\ ,\quad
R=\int_{\Omega}\rho_R\rd x\ .
\]
We find that $(S,I,R)$ is a solution of the SIRS model for constant-in-space $\rho_j$ with parameters
\begin{align*}
\bar\alpha&\bydef\frac{1}{|\Omega|}\int_{\Omega}\alpha_0 \rd x=\frac{1}{|\Omega|}\int_{\Omega\times V}\alpha F(v)\rd v\rd x=\alpha_0\ ,\\
\bar\beta&\bydef\frac{1}{|\Omega|}\int_{\Omega}\beta_0\rd x=\frac{1}{|\Omega|}\int_{\Omega\times V}\beta F(v)\rd v\rd x=\beta_0\ ,\\
\bar\gamma&\bydef\frac{1}{|\Omega|}\int_{\Omega}\gamma_0\rd x=\frac{1}{|\Omega|}\int_{\Omega\times V}\gamma F(v)\rd v\rd x=\gamma_0\ .
\end{align*}
We finish this subsection, noting that it is not difficult to generalise all the above processes for more general class transition kernels. In particular, it is possible to choose class transition kernels 
 \[
 \mathcal{Q}_j^{(T)}[\mathbf{f}]=f_j\left(\Psi_j[\mathbf{f}]-
 \sum_jf_j\Psi_j[\mathbf{f}]\right)\ ,
 \]
 where $\Psi_j[\mathbf{f}]$ is the \emph{fitness} (reproductive viability) function for a population at state $\mathbf{f}$ (i.e, the presence of the type $j$ individuals is given by $f_j$. In this case, $\dot f_j=\mathcal{Q}_j^{(T)}[\mathbf{f}]$ is the well-known replicator equation~\cite{HofbauerSigmund}. 

 From the discussion in this section, the kinetic formulation is one possible generalisation of the replicator equation, including not only space-inhomogeneity but also other variables (in particular, in this work, velocity).

\section{Existence and uniqueness of solutions of the kinetic model}
\label{sec:existence}

In this section, we prove the existence of solutions of the kinetic model~\eqref{eq:kinetic_epsilon}, for fixed $\varepsilon>0$ (that will be omitted along this section), under the assumption that there is no preferred direction of movement, i.e., $\eta=\xi=0$.  This assumption follows from the technical difficulty to prove existence and uniqueness results in kinetic modeling when transition kernels depend on the gradient of solutions, cf.~\cite[Remark 4]{Chalubetal_04}; see also~\cite{HillenStevens_00} for the one-dimensional case with dependence on the gradient. 

\newcommand{\ini}{{\mathrm{ini}}}

\begin{theorem}\label{thm:existence}
    Let  $\alpha,\beta,\gamma\le C$, and consider $f^\ini_j\in L^p(\Omega\times V)$, for $p\ge 2$. Assume $\mathcal{Q}^{(P)}_j\equiv 0$, $j=S,I,R$. Then, for $\varepsilon>0$, there exists a global solution $(f_S,f_I,f_R)\in\left[L^\infty(\mathbb{R}_+;L^p(\Omega\times V))\right]^3$ of the kinetic system~\eqref{eq:kinetic_epsilon}, with initial condition given by $(f_S^\ini,f_I^\ini,f_R^\ini)$.
\end{theorem}

\begin{proof}
We assume without loss of generality that $\varepsilon=1$. Whenever we refer to a constant $C$ (with or without index), we consider that ``there exists a positive number $C$ such that''. Its precise value is immaterial to the proof of the theorem, and the same parameter $C$ may refer to different constants along the proof.

We multiply Eq.~\eqref{eq:kinetic_epsilon} by $f_j^{p-1}$ and integrate over $\Omega\times V$:
\begin{align}\label{eq:Lpnorm_partial}
\frac{1}{p}\frac{\rd\ }{\rd t}\int_{\Omega\times V} f_j^{p}\rd v\rd x&=
\int_{\Omega\times V} f_j^{p-1}\left[\partial_tf_j+v\cdot\nabla_xf_j\right]\rd v\rd x\\
\nonumber
&=\int_{\Omega\times V} f_j^{p-1}\mathfrak{Q}_j[\mathbf{f}]\rd v\rd x
\end{align}
We remember the Holder's inequality: 
\[
\int |gh|\le\left(\int |g|^{q_1}\right)^{1/q_1}\left(\int |h|^{q_2}\right)^{1/q_2}\ ,\quad \frac{1}{q_1}+\frac{1}{q_2}=1\ ,\quad q_1,q_2\ge 1\ .
\]

For $g=f_j\ge 0$ and $h=1$, with $q_1=p$, $q_2=\sfrac{p}{(p-1)}$ we conclude that
\begin{equation}\label{eq:inequalityA}
\int_{V}f_j\rd v\le|V|^{1-\frac{1}{p}}\left(\int_Vf_j^{p}\rd v\right)^{1/p}\ .
\end{equation}
For $g=f_j^{p-1}$, $h=1$, $q_1=\sfrac{p}{(p-1)}$, $q_2=p$, we find
\begin{equation}\label{eq:inequalityB}
\int_Vf_j^{p-1}\rd v\le |V|^{\frac{1}{p}}\left(\int_Vf_j^p\rd v\right)^{1-\frac{1}{p}}\ .
\end{equation}
We study each term in the right-hand-side of Eq.~\eqref{eq:Lpnorm_partial}, using that $\mathfrak{Q}_j=\mathcal{Q}^{(R)}_j+\mathcal{Q}^{(T)}_j$. For $\mathcal{Q}^{(R)}_j$, we find
\begin{align}
\nonumber
     \int_{\Omega\times V} f_j^{p-1}Q_j^{(R)}[\mathbf{f}]\rd v\rd x&=\int_\Omega\lambda_j\left[\frac{1}{|V|}\int_{V\times V}f_j'f_j^{p-1}\rd v'\rd v-\int_Vf_j^p\rd v\right]\rd x\\
\nonumber
&=\int_\Omega\lambda_j\left[\frac{1}{|V|}\int_{V}f_j\rd v\int_Vf_j^{p-1}\rd v-\int_Vf_j^p\rd v\right]\rd x\\
\label{eq:inequalityQR}
&\le 0\ ,
\end{align}
where the last inequality follows from the multiplication of Inequalities~\eqref{eq:inequalityA} and~\eqref{eq:inequalityB}. For $\mathcal{Q}^{(T)}_I$, we find
\begin{align}
\nonumber
\int_{\Omega\times V}f_I^{p-1}\mathcal{Q}^{(T)}_I[\mathbf{f}]\rd v\rd x&=
\int_{\Omega\times V}\left[\frac{\beta f_Sf_I^p }{f_S+f_I+f_R}-\gamma f_I^p\right]\rd v\rd x\\
\label{eq:inequalityQTI}
&\le C\|f_I\|_{L^p(\Omega\times V)}^p\ ,
\end{align}
where we used that $\sfrac{f_S}{(f_S+f_I+f_R)}\le 1$.
Therefore, from Eqs.~\eqref{eq:Lpnorm_partial}, \eqref{eq:inequalityQR}, with $j=I$, and Eq.~\eqref{eq:inequalityQTI}, we find
\[
\frac{\rd\ }{\rd t}\|f_I\|_{L^p(\Omega\times V )}\le C\|f_I\|_{L^p(\Omega\times V)}\ \quad\forall t . 
\]
Using Gronwall's inequality~\cite{Grownall_1919}, we conclude that $f_I\in L^\infty(\mathbb{R}_+;L^p(\Omega\times V))$. 

Using Holder's inequality, with $g=f_I$, $h=f_R^{p-1}$, $q_1=p$, $q_2=\sfrac{p}{(p-1)}$, we show that
\begin{align}
\nonumber
\int_{\Omega\times V}f_R^{p-1}\mathcal{Q}^{(T)}_R[\mathbf{f}]\rd v\rd x&=
\int_{\Omega\times V}\left[\gamma f_If_R^{p-1}-\alpha f_R^p\right]\rd v\rd x\\
\nonumber
&\le\int_{\Omega\times V}\gamma f_If_R^{p-1}\rd v\,\rd x\\
\label{eq:inequalityQTR}
&\le C\|f_I\|_{L^p(\Omega\times V)}\|f_R\|_{L^p(\Omega\times V)}^{p-1}\ .
\end{align}
Using Eqs.~\eqref{eq:Lpnorm_partial},~\eqref{eq:inequalityQR}, ~\eqref{eq:inequalityQTR}, and the existence result for $f_I$, we conclude
\[
\|f_R\|^{p-1}_{L^p(\Omega\times V)}\frac{\rd\ }{\rd t}\|f_R\|_{L^p(\Omega\times V)}=
\frac{1}{p}\frac{\rd\ }{\rd t}\|f_R\|^p_{L^p(\Omega\times V)}\le C\|f_R\|^{p-1}_{L^p(\Omega\times V)}\ ,
\]
and, therefore,
\[
\frac{\rd\ }{\rd t}\|f_R\|_{L^p(\Omega\times V)}\le C\ ,\quad\forall t.
\]
We conclude that $f_R\in L^\infty(\mathbb{R}_+;L^p(\Omega\times V))$.

We proceed similarly to prove that
\[
\int_{\Omega\times V}f_S^{p-1}\mathcal{Q}^{(T)}_S[\mathbf{f}]\rd v\rd x\le C\|f_S\|_{L^p(\Omega\times V)}^{p-1}\ ,
\]
and, therefore, $f_S\in L^\infty(\mathbb{R}_+;L^p(\Omega\times V))$.
\end{proof}

Now, we prove the uniqueness of the solution. 

\begin{theorem}
\label{thm:uniqueness}
    Consider the same conditions of Thm.~\ref{thm:existence}, with $p=2$. Then the solution given by Thm.~\ref{thm:existence} is unique.
\end{theorem}

\begin{proof}
We assume again that $C$ is an arbitrary, possibly distinct, constant.
Let us define the norm
\begin{equation}\label{eq:normf}
\|\mathbf{f}-\mathbf{g}\|^2\bydef\int_{\Omega\times V}\left[ (f_S-g_S)^2+(f_I-g_I)^2+(f_R-g_R)^2\right]\rd v\,\rd x\ .
\end{equation}

From the linearity of $\mathcal{Q}^{(R)}_j$ it is not difficult to prove that
\[
\mathcal{Q}^{(R)}_j[\mathbf{f}]-\mathcal{Q}^{(R)}_j[\mathbf{g}]=\mathcal{Q}^{(R)}_j[\mathbf{f}-\mathbf{g}]\ .
\]
Therefore, from Eq.~\eqref{eq:inequalityQR},
\begin{align}\nonumber
    \int_{\Omega\times V}\left(\mathcal{Q}^{(R)}_j[\mathbf{f}]-\mathcal{Q}^{(R)}_j[\mathbf{g}]\right)\left(f_j-g_j\right)\rd v\rd x&=\int_{\Omega\times V}\mathcal{Q}^{(R)}_j[\mathbf{f}-\mathbf{g}](f_j-g_j)\rd v\,\rd x\\
    \label{eq:uniquenessQR}
    &\le 0\ .
\end{align}

We study the class-transition kernel:
\begin{align*}
&\left(\mathcal{Q}^{(T)}_S[\mathbf{f}]-\mathcal{Q}^{(T)}_S[\mathbf{g}]\right)\left(f_S-g_S\right)+
    \left(\mathcal{Q}^{(T)}_I[\mathbf{f}]-\mathcal{Q}^{(T)}_I[\mathbf{g}]\right)\left(f_I-g_I\right)\\
    &\qquad+
    \left(\mathcal{Q}^{(T)}_R[\mathbf{f}]-\mathcal{Q}^{(T)}_R[\mathbf{g}]\right)\left(f_R-g_R\right)\\
    &\quad=\left(-\beta\frac{f_If_S}{f_S+f_I+f_R}+\alpha f_R+\beta\frac{g_Ig_S}{g_S+g_I+g_R}-\alpha g_R\right)(f_S-g_S)\\
    &\qquad+\left(\beta\frac{f_If_S}{f_S+f_I+f_R}-\gamma f_I-\beta\frac{g_Ig_S}{g_S+g_I+g_R}+\gamma g_R\right)(f_I-g_I)\\
    &\qquad+\left(\gamma f_I-\alpha f_R-\gamma g_I+\alpha g_R\right)(f_R-g_R)\\
    &\quad=\beta\left(\frac{f_If_S}{f_S+f_I+f_R}-\frac{g_Ig_S}{g_S+g_I+g_R}\right)\left((f_I-g_I)-(f_S-g_S)\right)\\
    &\qquad+\alpha(f_R-g_R)(f_S-g_S)-\gamma(f_I-g_I)^2+\gamma(f_I-g_I)(f_R-g_R)\\
    &\qquad-\alpha(f_R-g_R)^2\ .
\end{align*}
To simplify the notation, we define $A_f\bydef f_S+f_I+f_R$, and a similar notation for $A_g$. We prove that
\begin{align*}
    \left|\frac{f_If_S}{A_f}-\frac{g_Ig_S}{A_g}\right|&=\frac{1}{A_fA_g}\left|f_Sg_S(f_I-g_I)+f_Ig_I(f_S-g_S)+f_Sg_R(f_I-g_I)\right.\\
    &\quad\left.-f_Sg_I(f_R-g_R)+f_Rg_I(f_S-g_S)\right|\\
    &\le C\left(|f_S-g_S|+|f_I-g_I|+|f_R-g_R|\right)\ .
\end{align*}
In the last passage, we used that
\[
\frac{f_jg_{j'}}{A_fA_g}\le 1\ .
\]

Using the inequality for $\mathcal{Q}^{(T)}_j$ and the classical inequality $ab\le\frac{1}{2}(a^2+b^2)$, we prove
\begin{align*}
&\sum_{j=S,I,R}\left(\mathcal{Q}^{(T)}_j[\mathbf{f}]-\mathcal{Q}^{(T)}_j[\mathbf{g}]\right)\left(f_j-g_j\right)\\   
&\quad\le C_1(f_S-g_S)^2+C_2(f_I-g_I)^2+C_3(f_R-g_R)^2\ .
\end{align*}
Finally, we conclude that
\begin{equation}
\label{eq:uniquenessQT}
\int_{\Omega\times V}\sum_{j=S,I,R}\left(\mathcal{Q}^{(T)}_j[\mathbf{f}]-\mathcal{Q}^{(T)}_j[\mathbf{g}]\right)\left(f_j-g_j\right)\rd v\,\rd x   
\le C\|\mathbf{f}-\mathbf{g}\|^2\ .
\end{equation}
Finally, from Eqs.~\eqref{eq:uniquenessQR},~\eqref{eq:uniquenessQT}, and the definition of $\mathfrak{Q}_j$, we conclude that
\begin{align*}
 \frac{1}{2}\frac{\rd\ }{\rd t}\|\mathbf{f}-\mathbf{g}\|^2&=\sum_j\int_{\Omega\times V}(f_j-g_j)\left(\partial_t(f_j-g_j)+v\cdot\nabla_x(f_j-g_j)\right)\rd v\,\rd x\\
 &=\sum_j\int_{\Omega\times V}\mathfrak{Q}_j[\mathbf{f}-\mathbf{g}](f_j-g_j)\rd v\,\rd x\\
 &\le C\sum_j\int_{\Omega\times V}(f_j-g_j)^2\rd v\rd x = C\|\mathbf{f}-\mathbf{g}\|^2\ ,
\end{align*}
and the uniqueness follows from an standard application of Gronwall's inequality for initial conditions $\mathbf{f}^\ini=\mathbf{g}^\ini$.
\end{proof}

\begin{remark}
    As discussed at the beginning of this section, we did not consider the preferred kernel in the proof of Thms.~\ref{thm:existence} and~\ref{thm:uniqueness}. As will be discussed in the conclusions, we believe that the existence and uniqueness theorem still holds with parameters $\eta,\xi>0$. 
\end{remark}

We finish noting that the existence and uniqueness of solutions of the mesoscopic model~\eqref{eq:FPI}--\eqref{eq:FPS}, when $\Gamma\equiv0$, can be proved in a similar way. However, we will not consider the case $\Gamma$ not identically zero, i.e., when \textbf{S} individuals actively try to escape from \textbf{I} individuals.    

\section{Examples and applications}
\label{sec:examples}

The primary motivation for incorporating kinetic modelling into epidemiology is to elucidate how variations in population motility influence disease transmission~\cite{Bertagliaetal_21B}. This approach is especially pertinent when policymakers evaluate large-scale confinement strategies. Comprehensive reviews of the effects of widespread prophylactic measures on disease dynamics are available in~\cite{Lawrence2024,Khumbudzo2025}.

A systematic analysis of the effects of confining \textbf{I} individuals would require considering class-dependent admissible velocity sets $V$, which extend beyond the current theoretical framework. Nevertheless, the effects of confining \textbf{I} individuals in one-dimensional cases will be formally addressed in Subsec.~\ref{ssec:confining}.

Before that, we will consider three other examples of kinetic models that are consistent with the SIRS epidemic model. In Subsec.~\ref{ssec:numerical}, we will simulate the kinetic models numerically to illustrate how individuals disperse within the discretised mesoscopic model. 

The second example, which will be developed in Subsec.~\ref{ssec:constant}, is pedagogical in nature and deals with a situation involving constant parameters, in order to clearly demonstrate the relationship between the kinetic model and the standard (space-homogeneous) SIRS dynamics. This is also important for understanding further expansions of our theory to include more general interaction kernels, which will be discussed in the conclusions.

The third example, in Subsec.~\ref{ssec:skeptical}, considers a situation that unfortunately occurs in several diseases, whereby large parts of the population in certain localities, ranging from small villages to entire countries, behave as if the disease did not exist. This is frequently driven by misinformation campaigns. However, outside this region, susceptible individuals naturally try to avoid infectious individuals.

Finally, Subsec.~\ref{ssec:confining} considers infectious individuals voluntarily confining themselves, which is represented by a decrease in the velocity of \textbf{I} individuals with respect to \textbf{S} and \textbf{R} individuals, as discussed at the beginning of this section. We demonstrate that confining \textbf{I} individuals reduces the intensity of disease spread.

\subsection{A numerical exploration}
\label{ssec:numerical}

Consider a large number of particles moving within a finite-size square lattice with periodic boundary conditions. Individuals are of type \textbf{S}, \textbf{I}, and \textbf{R}. $S(x,v)$ is the number of individuals of type \textbf{S} at $x$ moving at velocity $v$, with similar notation for $I(x,v)$ and $R(x,v)$. 

Initially, all individuals are uniformly distributed with random initial velocities in a central square whose side is one-third the size of the full lattice. The majority are of type \textbf{S}, with a small number of type \textbf{I} individuals. No \textbf{R} individuals are initially present. 

The update consists of two sequential steps: first, the moving direction is updated and the particle moves accordingly; then, class transitions are considered.

We start by moving each particle. For each individual, we consider two components of movement: the first in the horizontal direction and the second in the vertical direction. These are updated independently, resulting in eight admissible velocities corresponding to the set V: north, northwest, west, southwest, south, southeast, east and northeast. Consider an individual moving in one of the two main directions: horizontal or vertical. If the number of individuals moving in the same direction exceeds the average, there is a probability of changing their moving direction, denoted by a value of $\lambda\in(0,1]$. Otherwise, the direction remains unchanged. After performing the calculations for the horizontal and vertical directions independently, we update that individual's moving direction. For example, if an individual at position $x$ with velocity $v$ moves northeast ($v'= (1, 1)$), then we decrease $I(x, v)$ by one and increase $I(x + (1, 1), (1, 1))$ by one. All updates are performed only after running the update rule for all particles (synchronous update).

We did not consider preferred direction kernels, i.e. we set all of the $\mathcal{Q}^{(P)}_j$ kernels to zero.

Once all the movements have been performed, the class transitions are considered. The parameters $\alpha$, $\beta$, and $\gamma$ are fixed. For each individual, a random number $r$ is drawn uniformly between 0 and 1. If the individual is of class \textbf{S} and $r_1 < \beta I(x, v) / (S(x, v) + I(x, v) + R(x, v))$, then $S(x, v)$ decreases by one and $I(x, v)$ increases by one, indicating the \textbf{S} to \textbf{I} transition. If the individual is of class \textbf{I} and $r < \gamma$, then $I(x, v)$ decreases by one and $R(x, v)$ increases by one. Finally, if the individual is of type \textbf{R} and $r < \alpha$, then $R(x, v)$ decreases by one and $S(x, v)$ increases by one. Again, the update is synchronous.

Then, we repeat the movement and transition steps until the end of the simulation. We finish the simulation before the boundary conditions become relevant. See Fig.~\ref{fig:evolution} for the evolution of the mesoscopic quantities, $\rho_j$.

\begin{figure}[t]
\centering\includegraphics[width=0.48\textwidth,height=0.2\textwidth]{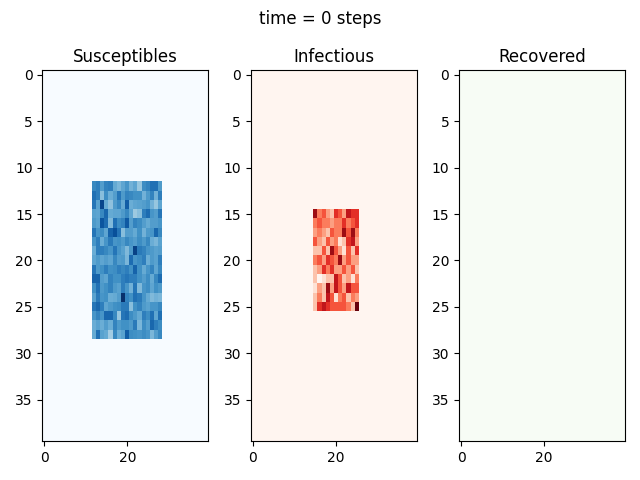}
\hfill
\includegraphics[width=0.48\textwidth,height=0.2\textwidth]{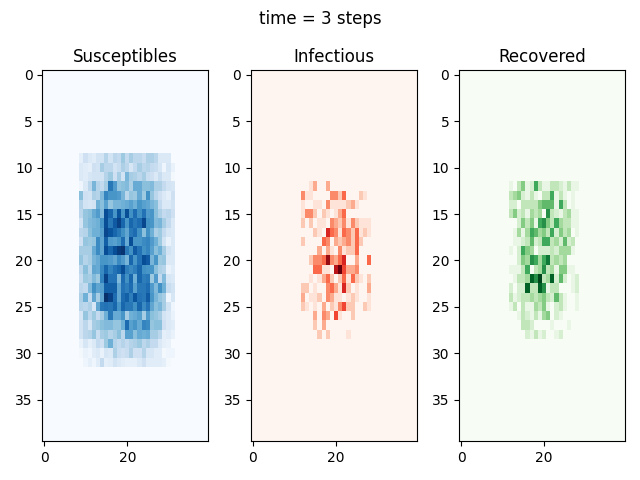}\\
\includegraphics[width=0.48\textwidth,height=0.2\textwidth]{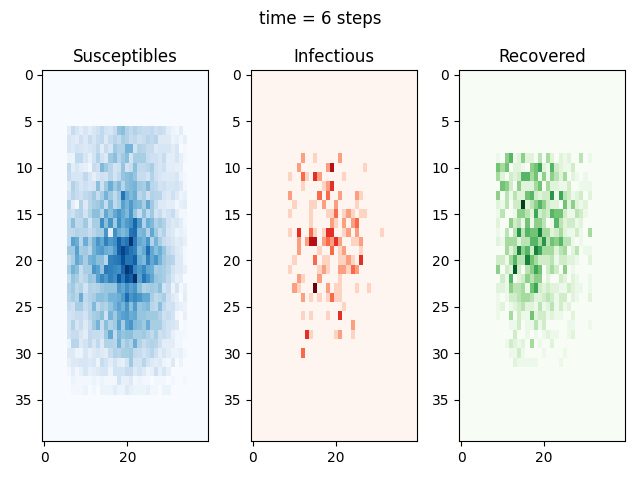}
\hfill    \includegraphics[width=0.48\textwidth,height=0.2\textwidth]{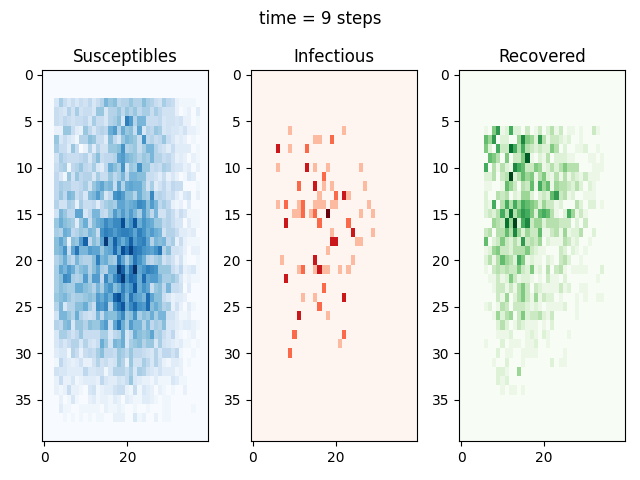}\\
\includegraphics[width=0.48\textwidth,height=0.2\textwidth]{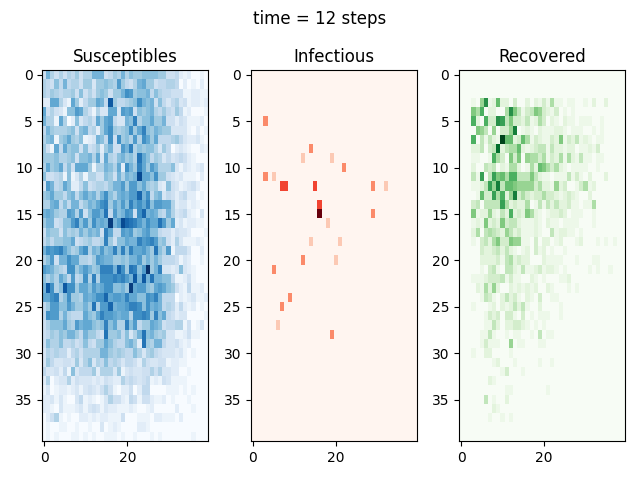}
\hfill    \includegraphics[width=0.48\textwidth,height=0.2\textwidth]{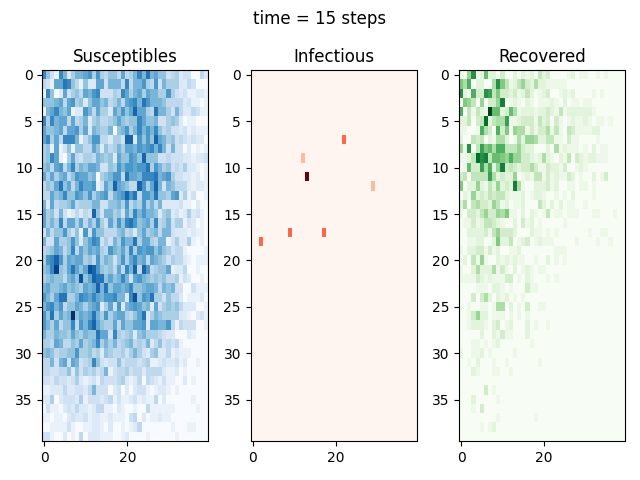}
\caption{Representation of the mesoscopic density $\rho_S$ (blue, left), $\rho_I$ (red, middle), and $\rho_R$ (right, green). More intense colors indicate higher concentrations. We consider the initial condition given by \textbf{S} and \textbf{I} individuals in the center of a square lattice of size $50\times 50$, with random initial velocities. Initial conditions are far from the boundary which are not reached for times considered. Initially, there are 10000 \textbf{S} individuals and 50 \textbf{I} individuals. We plot the configuration after 3, 6, 9, 12, and 15 steps, each step including the update of all particles. Class-transition parameters were given by $\alpha=0$ (i.e., immunity is permanent), $\beta=0.75$ (each encounter between \textbf{S} and \textbf{I} individuals has a probability of $\sfrac{3}{4}$ to generate a new \textbf{I} individual), and $\gamma=0.5$ (i.e., in each time step, each \textbf{I} individual has a probability $\sfrac{1}{2}$ to became \textbf{R}. The regression toward the mean parameter is $\lambda=0.5$, and we did not consider preferred directions, i.e., $\mathcal{Q}^{(P)}_j\equiv0$. }
\label{fig:evolution}
\end{figure}

\subsection{The constant parameters case}
\label{ssec:constant}

Assume that $\alpha,\beta,\gamma,\eta,\xi$ are positive constants. In this case, 
$\kappa_{j}=vF(|v|)/\lambda$, and $\Theta=(\eta+\xi)vF(|v|)/\lambda$, see Eqs.~\eqref{eq:kappa} and \eqref{eq:fS1}. The coefficients of the mesoscopic equations are given by
\[
D=\frac{1}{\lambda}\int_Vv^2F(|v|)\rd v\ ,\qquad
\Gamma=\frac{\eta+\xi}{\lambda}\int_Vv^2F(|v|)\rd v\ .
\]

The SIRS model will be, in this case, defined by parameters $\bar\alpha=\alpha$, $\bar\beta=\beta$, and $\bar\gamma=\gamma$, cf. Subsec.~\ref{ssec:SIRlimit}.

If we assume further that the space of admissible velocities is a hypersphere of radius $v_0$, for a movement in the two- or three-dimensional space, respectively, then $D=v_0^2/\lambda$ and $\Gamma=(\eta+\xi)v_0^2/\lambda$.

\subsection{Different disease risk acceptances}
\label{ssec:skeptical}

Let us assume that there is a proper subset $\Omega_0\subset\Omega$ such that $\eta=\xi=\eta_0$ for $x\in\Omega_0$ and $\eta=\xi=\eta_1$ otherwise, with constants $\eta_0<\eta_1$. This means that in the region $\Omega_0$, \textbf{S} individuals are more risk-prone to the disease, with contrary behavior outside $\Omega_0$. 

The interesting point is that the definition of $\kappa$ and $\Theta$, and, by extension, of $D$ and $\Gamma$ are local in $(t,x)$, and therefore the mesoscopic limits in each point $x\in\Omega$ will be obtained as if all $\eta$ and $\xi$ were constants in the entire domain with values equal to its values at $x$. 

\subsection{Confining infectious individuals}
\label{ssec:confining}

In this example, contrary to what was discussed so far, we consider that \textbf{S}, \textbf{I}, and \textbf{R} individuals have different velocity sets, allowing a possible confinement (voluntary or imposed) of \textbf{I} individuals. However, we will consider only the one-dimensional problem, as the dealing with class-transition kernels for different sets $V$ in the general multi-dimensional case is outside the scope of the present work. 

Given positive values $v_S,v_I,v_R>0$, we define three sets $V_S\bydef\{\pm v_S\}$, $V_I\bydef\{\pm v_I\}$, $V_R\bydef\{\pm v_R\}$, representing the set of possible velocities for \textbf{S}, \textbf{I}, and \textbf{R} individuals, respectively. Density of individuals will be given by $f_j^\pm=f_j(t,x,\pm v_j)$, with the superscript sign indicating if the corresponding individuals move to the right or to the left. Mesoscopic densities are given by $\rho_j=f_j^-+f_j^+$.

The equilibrium distribution is given by $F(v)=\frac{1}{2}$, and we proceed formally as in the Subsec.~\ref{ssec:mesoscopic}. In particular, the leading order is given by
\[
0=\mathcal{Q}^{(R)}_j[\mathbf{f}]=\lambda_j\left(\frac{f_{j,0}^++f_{j,0}^-}{2}-f_{j,0}^\pm\right)\ ,
\]
which implies that $f_{j,0}^+=f_{j,0}^-=\frac{\rho_j}{2}$. For the order 1 equation, we need to solve
\[
\lambda_j\left(\frac{\kappa^+_j-\kappa^-_j}{2}\right)=\mathcal{Q}^{(R)}_j[\kappa_j]=-\frac{v_j}{2}\ ,
\]
on the assumption that $\kappa_j^++\kappa_j^-=0$. We conclude that $\kappa_j^\pm=\pm\frac{v_j}{2\lambda_j}$, and, therefore $D_j=v_j^2/\lambda_j$. Finally, 
\[
\Theta_j=\frac{(\eta+\xi)v_j}{2\lambda_j}\delta_{jS}\ ,
\]
where $\delta_{SS}=1$ and $\delta_{jS}=0$ otherwise. From this, we conclude that 
\[
\Gamma_j=\frac{(\eta+\xi)v_j^{2}}{\lambda_j}\delta_{jS}\ .
\]

Mesoscopic equations are given by
\begin{align*}
&\partial_t\rho_S-\nabla_x\cdot\left(\frac{(\eta+\xi)v_S^2}{\lambda_S}(\nabla_x\rho_I)\rho_S\right)-\nabla_x\cdot\left(\frac{v_S^2}{\lambda_S}\nabla_x\rho_S\right)=\mathcal{Q}^{(T)}_S\left[\frac{\bm{\rho}}{2}\right]\ ,\\
    &\partial_t\rho_j-\nabla_x\left(\frac{v_j^2}{\lambda_j}\nabla_x\rho_j\right)=\mathcal{Q}^{(T)}_j\left[\frac{\bm{\rho}}{2}\right]\ ,\qquad j=I,R\ ,
\end{align*}
where $\bm{\rho}=(\rho_S,\rho_I,\rho_R)$.

We finish by noting that if $v_I\to0$, then the diffusion coefficient of \textbf{I} individuals converges to 0. Therefore, in this limit case, we expect that if $\rho_I(0,x)=0$, then $\rho_I(t,x)=0$ for all $t>0$. 

\section{Conclusions and future work}
\label{sec:conclusions}

We begin this work by introducing a kinetic model to describe the pathogen dynamics within a population. We considered a population of individuals that move freely in a bounded domain. Contagion occurs when susceptible and infectious individuals are in close proximity for an extended period of time. In other words, pathogen transmission only occurs if an \textbf{S} and an \textbf{I} individual are at the same point of the domain and moving with the same velocity. We started by showing that the kinetic model is formally compatible SIRS system of ordinary differential equations, i.e., the former simplifies to the latter when dealing with solutions depending only on time. 

The main advantage of using kinetic models is that they allow spatial phenomena, such as individual motility and spatial heterogeneity, to be included in the modelling. However, there are significantly more possibilities, and the present work has only begun to explore them.

One interesting application to be explored in future work is the introduction of vaccination, specifically allowing each individual in the population to freely choose whether to be vaccinated. The interaction between human behaviour with respect to vaccination and pathogen dynamics has been studied mathematically since the seminal work of~\cite{Bauch_Earn_2004}; see also~\cite{Doutoretal_JMB2016,Chalubetal_MB2024}. Unlike models based on ordinary differential equations, spatially inhomogeneous models allow us to consider local and global information simultaneously, both of which are relevant to each individual's decision on the central question: "To be or not to be vaccinated?" Furthermore, local and global information can produce conflicting answers to this question. An explicit example will help to illustrate this point. Consider a new class of individuals: the vaccinated ones. Assume that vaccination is voluntary. In a minimalistic model, human behaviour depends on global information, i.e. the endemic situation in the entire population, and local information given by the densities $f_j$, where $j=S,I,R,V$. Clearly, at least transiently, an outbreak in one region can coexist with a disease-free region. Kinetic models in epidemiology may provide a richer description of reality by considering the attenuation of information due to distance and the use of non-physical variables, e.g. describing individual risk acceptance.

A second approach is to explore more general transition kernels. In the current work, the transition kernel was specifically designed to correspond to the SIRS model at a macroscopic level. However, any system of ordinary differential equations in the form of the replicator system can be obtained as the macroscopic limit of a suitably defined class transition kernel. The replicator system of differential equations is a key model in evolutionary game theory (see, for example,~\cite{HofbauerSigmund}). In this work, we built upon a recent reformulation of the SIRS model in replicator equation form (cf.~\cite{HansenChalub_JTB2024}). To the best of our knowledge, the full relationship between kinetic models and game theory remains unexplored; however, see~\cite{Ambrosio_etal_2021}.

An important classical topic in the literature of kinetic models for chemotaxis is the existence of finite-time blow up \emph{versus} global-in-time solution; in particular, a finite-time blow up may occur in the partial differential equation regime (mesoscopic level, in our parlance), i.e., in the Keller-Segel system of partial differential equations~\cite{KellerSegel_70}. However, a formally compatible kinetic model may exhibit global existence~\cite{DolbeautPerthame,Chalubetal_04}. This question, which is extremely interesting for certain kernels inspired by game theory, is a current topic of interest for future research. Let us be more specific.

In the present work, we considered that \textbf{S} individuals turn into \textbf{I} by proximity, and that \textbf{S} tend to escape from higher concentrations of \textbf{I} individuals, or are neutral to higher concentrations of \textbf{I} individuals. We proved existence and uniqueness of solutions of the kinetic model in the case $\mathcal{Q}^{(P)}_j\equiv 0$, i.e. $\eta,\xi=0$, however, we expect that $\eta,\xi>0$ will have a regularising effect on the solution of the kinetic model, and Thm.~\ref{thm:existence} will still hold in this case. The situation would be completely different if we allowed $\eta$ and/or $\xi$ to be negative, i.e., if individuals of class \textbf{X} change to \textbf{Y} and are attracted by high concentrations of \textbf{Y} individuals.  Assume an evolutionary dynamics in which individuals with higher payoff reproduce at a higher rate and a rational behaviour in which they are attracted by environments in which they can expect an increase in payoff. These assumptions are natural in evolutionary game theory. In games of dominance or coordination games, one type will fixate while the other will become extinct~\cite{HofbauerSigmund}. In these cases, we expect a richer picture in which not only would finite-time blow up be possible, but also pattern formation, particularly in coordination games, as both types may fixate in different regions. The final outcome will possibly depend on the initial condition and deserves investigation. The general kinetic model with game-theoretical class transition kernels approach could be of interest in the study of pattern formation mediated by games played between cells, cf.~\cite{Tomlinson_EJC98,Dinglietal_BJC2009} for applications of evolutionary game theory in cancer modelling, or on the evolution of cooperation between rational agents, cf.~\cite{Axelrod_Hamilton:1981} for the initial studies on that topic and~\cite{NowakBonhoeffer_94} for its extension to spacial games.

From a purely mathematical point of view, one issue that requires further attention is the proof of convergence of $f_j^{(\varepsilon)}\stackrel{\varepsilon\to0}{\to}\rho_j$, which was not established in this study. This is a necessary step in proving the rigorous compatibility between the kinetic and mesoscopic models and should be considered in future work. 

For a correct modelling, the kernel $\mathfrak{Q}_j^{(\varepsilon)}$ required expansion of the dimensionless solution of the kinetic equation~\eqref{eq:kinetic_epsilon} in the parameter $\varepsilon$ up to order $\varepsilon^2$, unlike the case of the derivation of the Navier-Stokes formula from the Boltzmann equation with a collisional kernel~\cite{BardosGolseLevermore} or in the derivation of the Keller-Segel model from kinetic models for chemotaxis~\cite{OthmerHillen,Chalubetal_04}. The order 2 equation, studied in Subsubsec.~\ref{sssec:order2}, would be trivial if there were no class-transition in the model. The fact that $\mathcal{Q}^{(T)}_j$ is not conservative seems to be related to the necessity of the order $\varepsilon^2$ expansion. Furthermore, the effect of class transition is smaller than both the diffusion (order 0) and the drift (order 1), a fact that deserves further exploration; see also~\cite{Bisi_etal} for a recent example involving non-conservative kernels for which expansion up to order $\varepsilon^2$ is required to calculate formal macroscopic limits.

This paper has only begun to explore the link between kinetic models, mathematical epidemiology and, more broadly, evolutionary game theory. However, there is significant potential for further exploration of these interconnections!  

\section*{Acknowledgement}
All the authors are funded by national funds through the FCT – Funda\c{c}\~ao para a Ci\^encia e a Tecnologia, I.P., under the scope of the projects UIDB/ 00297/2020 (https://doi.org/10.54499/UIDB/00297/2020) and UIDP/00297/ 2020 (https:doi.org/10.54499/UIDP/00297/2020) (Center for Mathematics and Applications --- NOVA Math) and the project \emph{Mathematical Modelling of Multi-scale Control Systems: applications to human diseases} (CoSysM3)  2022.03091.PTDC (https://doi.org/10.54499/2022.03091.PTDC), supported by national funds (OE), through FCT/MCTES. All the authors thank Ana Jacinta Soares (Universidade do Minho, Portugal) for insightful comments on preliminary versions of the present work. FACCC also acknowledges Robert Pego (Carnegie Mellon University) for suggesting the application of the current theory in the 1D case, with varying velocity sets, resulting in the example discussed in Subsec.~\ref{ssec:confining}.

All the authors contributed equally to the development of the work's ideas, computational codes, data analysis, discussions, and writing of the final version of the manuscript.


\newcommand{\etalchar}[1]{$^{#1}$}

 \end{document}